\newtheorem{thm}{Theorem}[section]
\newtheorem{cor}[thm]{Corollary}
\newtheorem{lem}[thm]{Lemma}
\newtheorem{prop}[thm]{Proposition}
\theoremstyle{definition}
\newtheorem{defn}[thm]{Definition}
\theoremstyle{remark}
\numberwithin{equation}{section}
\newcommand{\M}{\mathcal{M}}
\newcommand{\Real}{\mathbb{R}}
\renewcommand{\div}{\text{div}}
\newcommand{\Hyperbolic}{\mathbb{H}}
\begin{document}

\title[Translating graphs by mean curvature flow]
 {Translating graphs by mean curvature flow}

\author{Leili Shahriyari}
\address{Department of Mathematics, Johns Hopkins University, 3400 N Charles St, Baltimore, MD 21218}
\email{shahriya@math.jhu.edu}





\keywords{mean curvature flow, compactness theorem, translating graphs}


\dedicatory{}



\begin{abstract}
The aim of this work is studying translating graphs by mean curvature flow in $\Real^3$. We prove non-existence of complete translating graphs over bounded domains in $\Real^2$. Furthermore, we show that there are only three types of complete translating graphs in $\Real^3$; entire graphs, graphs between two vertical planes, and graphs in one side of a plane. In the last two types, graphs are asymptotic to planes next to their boundaries. We also prove stability of translating graphs and then we obtain a pointwise curvature bound for translating graphs in $\Real^3$.
\end{abstract}

\maketitle

\section{Introduction}

 Mean curvature flow evolves hypersurfaces in the unit normal direction with speed equal to the mean curvature at each point. It is the steepest descent flow for the area functional. In particular, minimal hypersurfaces are stationary solutions. In other words, a family of smoothly embedded hypersurfaces $(\M_t)_{t\in I}$ moves by mean curvature if
\begin{equation}\label{mcf}
\frac{\partial x}{\partial t}=\vec{H}(x),
\end{equation}
for $x\in\M_t$ and $t\in I$, $I\subset\Real$ an open interval. Here $\vec{H}(x)$ is the mean curvature vector at $x\in\M_t$.

The evolution equation (\ref{mcf}) can develop singularities in finite time $T$, which are classified into two types according to the rate at which the maximal curvature, $\displaystyle\max_{\M_t}|A(t)|$, tends to infinity for $t\to T$. Here $|A(t)|$ is the second fundamental form of $\M_t$. By proving the monotonicity formula, Huisken \cite{Hui90} showed that the flow is asymptotically self-similar near a given type-I singularity and, thus, is modeled by self-shrinking solutions of the flow. However, the examples of convergence in \cite{AV95, AV97} indicate that type-II singularities are modeled by translating surfaces. Also, Huisken and Sinestrari \cite{GC} proved that if the initial surface $\M_0$ has nonnegative mean curvature, then any limiting flow of a type-II singularity has convex surfaces $\widetilde{\M}_t$, $t\in\Real$. Furthermore, either $\widetilde{\M}_t$ is a strictly convex translating soliton or (up to rigid motion) $\widetilde{\M}_t=\Real^{n-k}\times\Sigma^k_t$, where $\Sigma^k_t$ is a lower dimensional strictly convex translating soliton in $\Real^{k+1}$. The proof of this theorem used an important theorem of Hamilton \cite{Hamilton}, which states that any strictly convex eternal solution to the mean curvature flow, where the mean curvature assumes its maximum value at a point in space-time must be a translating solution.

Note that solitons generally move by symmetries of Euclidean space, either scaling symmetries or translations, that means for these surfaces, we have
\begin{equation}
\frac{\partial x}{\partial t}=\vec{C}+\vec{v},
\end{equation}
where $\vec{C}$ is the velocity vector of the translation and $\vec{v}$ is a vector field tangent to the surface $\M_t$. When $\vec{C}=-\frac{x}{2}$ or $\vec{C}=e_{n+1}$, we will respectively get self similar shrinkers and vertically translating surfaces in $\Real^{n+1}$. If $\vec{C}=Ce_{n+1}$, and taking the inner product with the unit normal vector $\upsilon$, we obtain the following equation for translating surfaces
\begin{equation}
H=C\langle e_{n+1},\upsilon \rangle.
\end{equation}

Translating graphs by mean curvature flow are translating surfaces that can be viewed as a graph of a function over a domain. Let the graph of the function $u=u(x)$ be a translating graph by the mean curvature flow. Since $H=\div(\frac{\nabla u}{\sqrt{1+|\nabla u|^2}})$, the graph of $u$ is a vertically translating graph with constant speed $C$ if and only if $u$ is a solution to the  following equation
 \begin{equation}\label{te}
 \div\left(\frac{\nabla u}{\sqrt{1+|\nabla u|^2}}\right)=\frac{C}{\sqrt{1+|\nabla u|^2}}.
  \end{equation}

In this work, we study translating graphs by mean curvature flow in $\Real^3$. First of all, we prove these graphs are stable minimal surfaces in a certain conformal metric. Therefore, the curvature estimate for minimal surfaces gives us a curvature estimate for translating graphs. Using this curvature estimate, we obtain non-existence of complete translating graphs over bounded domains in $\Real^2$. Furthermore, we show that there are only three types of complete translating graphs in $\Real^3$; entire graphs, graphs between two vertical planes, and graphs in one side of a plane. In the last two types, graphs are asymptotic to planes next to their boundaries.

The author would like to thank Bill Minicozzi and Xuan Hien Nguyen for many helpful discussions of this paper.


\section{Stability of translating graphs}

Colding and Minicozzi proved self similar shrinkers are minimal surfaces in a certain conformal metric \cite{CM, CM1}. In this section, using the same method, we prove the similar statement for translating graphs, i.e. translating graphs are stable minimal hypersurfaces in a conformal metric.

From now on without loss of generality we assume that the speed $C=-1$. Let $p$ be a fix point in $\Real^3$, and $r>0$, then we define the functional $F$ on a hypersurface $\Sigma\subset\Real^{3}$ by
	\begin{equation}
	\label{eq:functional}
	F(\Sigma)=\displaystyle\int_{\Sigma\cap B_r(p)} e^{x_{3}} d\mu.
	\end{equation}

In the following arguments, we prove translating surfaces are stable minimal hypersurfaces in $\Real^{3}$ with respect to the conformal metric $g_{ij}=e^{x_{3}}\delta_{ij}$.

\begin{lem}
If $x'=\eta \upsilon$ is a compactly supported normal variation of a hypersurface $\Sigma\subset\Real^{3}$ and $s$ is the variation parameter, then
	\[
	\frac{\partial}{\partial s} F(\Sigma_s)=\displaystyle\int_{\Sigma\cap B_r(p)} \eta\left(H+\langle e_{3},\upsilon\rangle\right)e^{x_3} d\mu.
	\]
\end{lem}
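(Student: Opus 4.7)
The statement is the first variation formula for the weighted area functional $F$, which one can also recognize as the ordinary first variation of area in the conformal metric $g_{ij} = e^{x_3}\delta_{ij}$ (since for a surface in $\Real^3$ the area element of this conformal metric is $e^{x_3}\,d\mu$). My plan is a direct computation: differentiate under the integral, split the derivative of the integrand into the variation of the exponential weight and the standard first variation of the area element, and combine.

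Because $\eta$ has compact support in $\Sigma\cap B_r(p)$, differentiation under the integral gives
\[
\frac{\partial}{\partial s}F(\Sigma_s)\bigg|_{s=0} = \int_{\Sigma\cap B_r(p)}\left(\frac{\partial}{\partial s}e^{x_3}\right)d\mu + \int_{\Sigma\cap B_r(p)} e^{x_3}\,\frac{\partial}{\partial s}\,d\mu.
\]
From $x' = \eta\upsilon$ one gets $\partial_s x_3 = \langle x',e_3\rangle = \eta\langle\upsilon,e_3\rangle$ at $s=0$, so $\partial_s e^{x_3}\big|_{s=0} = \eta\langle e_3,\upsilon\rangle e^{x_3}$. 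For the area form, the classical first variation for a normal variation gives $\partial_s d\mu\big|_{s=0} = \mathrm{div}_{\Sigma}(\eta\upsilon)\,d\mu = \eta\,H\,d\mu$, where the tangential-gradient-of-$\eta$ term drops out because $\upsilon$ is normal and $\mathrm{div}_{\Sigma}(\upsilon)=H$ under the paper's sign convention $\vec H = H\upsilon$. Adding the two contributions and factoring $\eta\,e^{x_3}$ out of the integrand produces exactly the stated identity.

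There is no serious obstacle; this is the expected short computation. The only point requiring care is keeping sign conventions consistent: the convention $\vec H = H\upsilon$ used to compute $\mathrm{div}_{\Sigma}(\upsilon) = H$ must agree with the one used earlier to write the translator equation as $H = C\langle e_{n+1},\upsilon\rangle$. Otherwise the formula comes out with the wrong sign and translators, which with $C=-1$ satisfy $H+\langle e_3,\upsilon\rangle = 0$, would not appear as critical points of $F$ — which is the whole reason for introducing this weighted functional.
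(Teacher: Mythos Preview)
Your proof is correct and follows exactly the same approach as the paper: differentiate under the integral, apply the standard first variation $(d\mu)'=\eta H\,d\mu$, and compute $\partial_s e^{x_3}=\eta\langle e_3,\upsilon\rangle e^{x_3}$ (equivalently, $\partial_s\log e^{x_3}=\eta\langle e_3,\upsilon\rangle$). Your remark about the sign convention $\vec H=H\upsilon$ is a useful clarification but does not represent a different method.
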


\begin{proof}
By the first variation formula we have $(d\mu)'=\eta H d\mu$. Also the $s$ derivative of $\log(e^{x_{3}})$ is given by $\eta\langle e_{3}, \upsilon\rangle$. Thus we have the lemma.
\end{proof}

 In this part $\Sigma \subset \Real^3$ is a smooth embedded hypersurface; $\Delta$ and $\nabla$ are the (submanifold) Laplacian and gradient respectively, on $\Sigma$. Let the graph $\widetilde\Sigma$ be the graph $\Sigma$ respect to the conformal metric $g$. We know that the hypersurface $\widetilde{\Sigma}$ subset of Riemannian manifold $(\Real^3,g)$ is stable minimal hypersurface if and only if the second derivative of the area functional for all normal variations of $\Sigma$ is positive at $s=0$.

Now we are defining the second order operator $L$ by
	\begin{equation*}
	L\eta =\Delta \eta +|A|^2 \eta+\langle e_{3}, \nabla \eta\rangle.
	\end{equation*}

\begin{defn} \label{Def:Lstable}
We say a translating surface is $L$-stable, if for any compactly supported function $\eta$ we have
	\begin{equation}
	\label{eq:L-stable}
	\int_\Sigma \eta L\eta e^{x_{3}}\leq 0.
	\end{equation}
\end{defn}

The linear operator $L$ is associated to normal perturbations of $H + \langle e_{3}, \upsilon \rangle$. The function $H + \langle\vec e_{3}, \upsilon \rangle$ is invariant under translations in $\Real^2$, therefore $\langle \mathbf{v}, \upsilon\rangle$ is in the kernel of $L$ for any constant vector $\mathbf{v}$.

\begin{prop}\label{ms}
If the translating graph $\Sigma$ in $\Real^3$ with respect to the Euclidean metric is $L$-stable, then it is a stable minimal hypersurface in $\Real^3$ with respect to the conformal metric $g_{ij}=e^{x_3}\delta_{ij}$.
\end{prop}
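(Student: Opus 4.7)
My plan is to identify $F$ with the area functional in the conformal metric $g_{ij}=e^{x_3}\delta_{ij}$, deduce from the translating-graph equation that $\Sigma$ is a critical point, compute the second variation, and match it with the quadratic form appearing in the $L$-stability definition.

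First, on a two-dimensional surface in $\Real^3$ the conformal rescaling of the ambient metric by the factor $e^{x_3}$ rescales the induced area element by the same factor, so $F(\Sigma)$ is literally the $g$-area of $\Sigma\cap B_r(p)$. The preceding lemma gives $F'(\Sigma_s)|_{s=0}=\int\eta(H+\langle e_3,\upsilon\rangle)e^{x_3}\,d\mu$, and the translating-graph equation with $C=-1$ is precisely $H+\langle e_3,\upsilon\rangle=0$ on $\Sigma$. Hence $\Sigma$ is a critical point of $F$, i.e.\ minimal in $g$.

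Next, differentiate once more. At $s=0$ the factor $H+\langle e_3,\upsilon\rangle$ itself vanishes, so only the derivative of that factor contributes, giving
\[
F''(\Sigma_s)\big|_{s=0}=\int_\Sigma \eta\,\partial_s\bigl(H+\langle e_3,\upsilon\rangle\bigr)\big|_{s=0}\,e^{x_3}\,d\mu.
\]
Substituting the standard normal-variation formulas $\partial_s H=-\Delta\eta-|A|^2\eta$ and $\partial_s\upsilon=-\nabla\eta$ (whence $\partial_s\langle e_3,\upsilon\rangle=-\langle e_3,\nabla\eta\rangle$) collapses the integrand to $-\eta L\eta\,e^{x_3}$. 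So $F''(\Sigma_s)|_{s=0}=-\int\eta L\eta\,e^{x_3}\,d\mu\ge 0$ by the $L$-stability assumption.

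Finally, a compactly supported normal variation in the metric $g$ has the form $\tilde\eta\,\tilde\upsilon$ with $\tilde\upsilon=e^{-x_3/2}\upsilon$, and so can be written as the Euclidean-normal variation $\eta\upsilon$ with $\eta=e^{-x_3/2}\tilde\eta$; this rescaling is a bijection on compactly supported smooth functions. Thus the inequality above, valid for every Euclidean $\eta$, translates directly into non-negativity of the second variation of $g$-area for every $g$-normal variation, which is stability of $\Sigma$ as a minimal hypersurface in $(\Real^3,g)$. The only real obstacle is sign-bookkeeping: one must pin down the conventions from the first-variation lemma (where $(d\mu)'=\eta H\,d\mu$) so that the standard formulas for $\partial_s H$ and $\partial_s \upsilon$ recombine into exactly $-L\eta$ for $L$ as defined in the paper; once that matching is checked, the conclusion is immediate.
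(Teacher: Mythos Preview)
Your proposal is correct and follows essentially the same approach as the paper: compute the second variation of $F$ at $s=0$, use that $H+\langle e_3,\upsilon\rangle=0$ kills all terms except the derivative of that factor, and substitute the standard formulas $\partial_s H=-\Delta\eta-|A|^2\eta$ and $\partial_s\upsilon=-\nabla\eta$ to obtain $F''|_{s=0}=-\int\eta L\eta\,e^{x_3}\ge 0$. Your additional remarks---that $F$ is literally the $g$-area on a $2$-surface and that Euclidean and $g$-normal variations are in bijection via $\eta=e^{-x_3/2}\tilde\eta$---are justifications the paper leaves implicit, but they do not constitute a different route.
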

\begin{proof}
Let $\eta$ be a smooth compactly supported function over $\Sigma$. 
\begin{eqnarray}
&&\frac{\partial^2}{\partial s^2} F(\Sigma_s) \big|_{s=0}=\frac{\partial}{\partial s} \left[\displaystyle\int_{\Sigma\cap B_r(p)} \eta\left(H+\langle e_{3},\upsilon\rangle\right)e^{x_3} d\mu\right]_{s=0} =  \notag \\
&& \displaystyle\int_{\Sigma\cap B_r(p)} \left[ \left( \frac{\partial}{\partial s}\left(H+\langle e_{3},\upsilon\rangle\right)\right) \eta e^{x_3} d\mu
+ \left(H+\langle e_{3},\upsilon\rangle\right) \frac{\partial}{\partial s} \left(\eta e^{x_3} d\mu\right)  \right]_{s=0}.
\end{eqnarray}

Since $\Sigma$ is translating graph, at $s=0$,  $H+\langle e_{3},\upsilon\rangle=0$. So we have

\begin{eqnarray}
\frac{\partial^2}{\partial s^2} F(\Sigma_s) \big|_{s=0}= \displaystyle\int_{\Sigma\cap B_r(p)} \left(\frac{\partial}{\partial s}\left(H+\langle e_{3},\upsilon\rangle\right)\right) \eta e^{x_3} d\mu \big|_{s=0}
\end{eqnarray}

From Lemma A.2 in \cite{CM}, we have
\begin{eqnarray}
\frac{\partial \upsilon}{\partial s}\big|_{s=0}&=& - \nabla \eta, \\
\frac{\partial H}{\partial s}\big|_{s=0}&=&- \Delta \eta - |A|^2 \eta.
\end{eqnarray}

By definition of the operator $L$, we obtain

\begin{eqnarray}
\frac{\partial^2}{\partial s^2} F(\Sigma_s)\big|_{s=0}= - \displaystyle\int_{\Sigma\cap B_r(p)} \eta L\eta e^{x_3}.
\end{eqnarray}

\end{proof}

\begin{lem}
\label{lem:L-kernel}
For every constant vector $\mathbf{v}$, we have $L\langle \mathbf{v}, \upsilon\rangle=0$.
\end{lem}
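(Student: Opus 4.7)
The plan is to verify $L\eta = 0$ for $\eta = \langle \mathbf{v}, \upsilon \rangle$ by a direct computation combining two classical identities on a hypersurface in $\Real^3$ with the translator equation $H = -\langle e_3, \upsilon \rangle$. The first identity is the ``gradient of the normal component'' formula $\nabla \langle \mathbf{w}, \upsilon \rangle = A(\mathbf{w}^T)$, valid for any constant vector $\mathbf{w} \in \Real^3$; here $\mathbf{w}^T$ denotes the tangential projection and $A$ the Weingarten map, and the formula follows directly from $D_{e_i} \upsilon = \sum_j h_{ij} e_j$ in a local orthonormal frame. The second is the Jacobi-type identity $\Delta \langle \mathbf{v}, \upsilon \rangle + |A|^2 \langle \mathbf{v}, \upsilon \rangle = \langle \mathbf{v}, \nabla H \rangle$, which I would derive by differentiating the first identity once more and applying the Codazzi equation $\nabla_i h_{ij} = \nabla_j H$.

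With these in hand, I would differentiate the translator equation to obtain $\nabla H = -A(e_3^T)$, and substitute into the Jacobi identity to get $\Delta \eta + |A|^2 \eta = -A(\mathbf{v}^T, e_3^T)$, where I write $A(X,Y) = \langle A(X), Y \rangle$ for the second fundamental form. The remaining term in $L\eta$ is $\langle e_3, \nabla \eta \rangle$, which by the gradient formula applied to $\mathbf{w} = \mathbf{v}$ equals $\langle e_3^T, A(\mathbf{v}^T) \rangle = A(\mathbf{v}^T, e_3^T)$. The two contributions then cancel by the symmetry $A(X,Y) = A(Y,X)$, giving $L\eta = 0$.

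The only real subtlety is bookkeeping the signs so that the convention for $h_{ij}$ and $A$ is compatible with the formulas $\partial_s \upsilon = -\nabla \eta$ and $\partial_s H = -\Delta \eta - |A|^2 \eta$ already invoked in the proof of Proposition~\ref{ms}; once a consistent convention is fixed, the whole argument is only a few lines. A conceptually cleaner alternative would exploit translation invariance of $\tilde H := H + \langle e_3, \upsilon \rangle$ directly: since $\tilde H$ vanishes on $\Sigma$ and on every translate $\Sigma + s\mathbf{v}$, its derivative along the variation $F(x,s) = x + s\mathbf{v}$ is zero, and computing this derivative via the formulas used in Proposition~\ref{ms} (after splitting $\mathbf{v} = \eta \upsilon + \mathbf{v}^T$ and noting that the tangential contributions $A(\mathbf{v}^T, e_3^T)$ appearing in $\partial_s H$ and in $\partial_s \langle e_3, \upsilon \rangle$ cancel by symmetry of $A$) returns the identity $-L\eta = 0$.
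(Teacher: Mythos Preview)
Your proposal is correct and follows essentially the same computational route as the paper: the paper works in a local orthonormal frame to derive $\nabla_{\gamma_i}\xi=-a_{ij}\langle\mathbf{v},\gamma_j\rangle$, then uses Codazzi to obtain $\Delta\xi=\langle\mathbf{v},\nabla H\rangle-|A|^2\xi$, and finally differentiates the translator equation to produce the cancellation $\langle\nabla H,\mathbf{v}\rangle=-\langle e_3,\nabla\xi\rangle$. Your version is the same argument written in invariant notation (with the opposite sign convention for $A$, which you correctly flag), and the translation-invariance alternative you sketch is precisely the heuristic the paper mentions just before the lemma.
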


\begin{proof} We give a computational proof.
Let $\gamma_i$ be an orthonormal frame for $\Sigma$ and set $\xi=\langle \mathbf{v},\upsilon\rangle$. Working at a fixed point $P$ and choosing the frame $\gamma_i$, so that $\nabla_{\gamma_i}^T \gamma_j(P)=0$, differentiating gives at $P$ that
	\begin{equation}
	\label{e1}
	\nabla_{\gamma_i} \xi=\langle \mathbf{v}, \nabla_{\gamma_i}\upsilon\rangle=-a_{ij}\langle \mathbf{v}, \gamma_j\rangle.
	\end{equation}
Using Codazzi equation at $P$, we have
	\begin{equation*}
	\nabla_{\gamma_k}\nabla_{\gamma_i} \xi=-a_{ik,j}\langle \mathbf{v}, \gamma_j\rangle-a_{ij}\langle \mathbf{v}, a_{jk}\upsilon\rangle.
	\end{equation*}
Taking the trace gives
	\begin{equation}
	\label{e2}
	\Delta \xi=\langle \mathbf{v}, \nabla H \rangle-|A|^2 \xi.
	\end{equation}
Notice that
	\begin{align*}
	\nabla_{\gamma_i} H&=-\nabla_{\gamma_i}\langle e_{3},\upsilon\rangle \notag\\
	&=-\langle\nabla_{\gamma_i} e_{3}, \upsilon\rangle - \langle e_{3},\nabla_{\gamma_i} \upsilon\rangle \notag\\
	&= a_{ij}\langle e_{3},\gamma_j\rangle.
	\end{align*}
Therefore, using (\ref{e1}) we have
	\begin{equation}
	\label{e3}
	\langle\nabla H, \mathbf{v}\rangle=a_{ij}\langle e_{3},\gamma_j\rangle\langle \gamma_i,\mathbf{v}\rangle=-\langle e_{3},\nabla \xi\rangle.
	\end{equation}
Thus, by \eqref{e2} and \eqref{e3} we get
	\begin{equation*}
	L \xi=\Delta \xi+\langle\nabla \xi,e_{3}\rangle+|A|^2 \xi=0. \qedhere
	\end{equation*}

\end{proof}

\begin{thm}\label{Lstable}
Translating graphs in $\Real^3$ are $L$-stable.
\end{thm}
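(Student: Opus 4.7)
The plan is to use the classical positive-solution trick (Fischer-Colbrie--Schoen style): if the linear operator $L$ admits a positive function in its kernel, then $L$-stability follows automatically. Here, since $\Sigma$ is a graph of some function $u$ over a planar domain, the upward unit normal $\upsilon=(-\nabla u,1)/\sqrt{1+|\nabla u|^2}$ has
$$\xi := \langle e_3, \upsilon\rangle = \frac{1}{\sqrt{1+|\nabla u|^2}} > 0$$
everywhere on $\Sigma$. By Lemma \ref{lem:L-kernel} applied to $\mathbf v = e_3$, this positive function satisfies $L\xi = 0$.

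For any compactly supported smooth $\eta$ on $\Sigma$, I would then set $\phi := \eta/\xi$, which is again smooth and compactly supported since $\xi$ is smooth and strictly positive. A direct computation of $L(\xi\phi)$ using the product rule gives
$$L(\xi\phi) = \phi\, L\xi + \xi\,\Delta\phi + 2\langle\nabla\xi,\nabla\phi\rangle + \xi\,\langle e_3,\nabla\phi\rangle = \xi\,\Delta\phi + 2\langle\nabla\xi,\nabla\phi\rangle + \xi\,\langle e_3,\nabla\phi\rangle,$$
using $L\xi=0$. Multiplying by $\xi\phi\,e^{x_3}$, integrating over $\Sigma$, and then integrating the term $\int \xi^2\phi\,\Delta\phi\, e^{x_3}$ by parts (remembering that $\nabla_\Sigma e^{x_3}=e^{x_3}(e_3)^T$ produces exactly the extra drift term $\langle e_3,\nabla\phi\rangle$) will make the two cross terms $\pm 2\int\xi\phi\langle\nabla\xi,\nabla\phi\rangle e^{x_3}$ cancel, and similarly the two $\langle e_3,\nabla\phi\rangle$ terms cancel, leaving
$$\int_\Sigma \eta\, L\eta\, e^{x_3} = -\int_\Sigma \xi^2\,|\nabla\phi|^2\, e^{x_3} \le 0,$$
which is precisely $L$-stability in the sense of Definition \ref{Def:Lstable}.

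There is no real obstacle here beyond bookkeeping in the integration by parts; the essential content is the two preceding facts, namely that $\langle e_3,\upsilon\rangle$ lies in the kernel of $L$ (already established) and that it is strictly positive on a graph (trivial from the formula for the normal). The one thing to note carefully is that the drift $\langle e_3,\nabla\cdot\rangle$ in $L$ is exactly what makes $L$ formally self-adjoint with respect to the weighted measure $e^{x_3}d\mu$, which is why the weighted integration by parts produces the clean cancellation above.
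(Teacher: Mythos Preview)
Your proposal is correct and is essentially the same argument as the paper's: both exploit that $\xi=\langle\mathbf v,\upsilon\rangle$ is a positive function in $\ker L$ on a graph, then use the Fischer-Colbrie--Schoen substitution and weighted integration by parts to obtain $\int \eta L\eta\,e^{x_3}=-\int \xi^2|\nabla(\eta/\xi)|^2 e^{x_3}\le 0$. The only cosmetic differences are that the paper allows a general graph direction $\mathbf v$ (you take $\mathbf v=e_3$, which is the relevant case here) and swaps the roles of the letters $\eta$ and $\phi$, performing the division by $\xi$ at the end rather than at the start.
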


\begin{proof}
Since $\Sigma$ is a graph, there is a unit vector $\mathbf{v}$ in $\Real^{3}$ so that $\langle \mathbf{v}, \upsilon(x)\rangle\neq 0$ for all $x\in\Sigma$. We define the function $\xi$ on $\Sigma$ by
	\begin{equation*}
	\xi(x) = \langle \mathbf{v}, \upsilon(x)\rangle.
	\end{equation*}
It follows that  $0<\xi \leq 1$ and, by Lemma \ref{lem:L-kernel}, that $L \xi = 0$. Given any smooth compactly supported function $\eta$ on $\Sigma$, the function $\phi = \eta \xi$ satisfies
	\begin{align}
	L(\phi)&=\eta L \xi + \xi \left( \Delta \eta + \langle e_{3} , \nabla \eta \rangle \right) + 2 \langle \nabla \eta , \nabla \xi \rangle, \notag\\
	\label{eq:L-phi}	 &= \xi \left( \Delta \eta + \langle e_{3} , \nabla \eta \rangle \right) + 2 \langle \nabla \eta , \nabla \xi \rangle .
	\end{align}
Using Stokes' theorem with $\frac{1}{2} \div \left( \xi^2 \nabla \eta^2 e^{x_{3}} \right)$, we obtain
	\begin{equation}
	\label{eq2}
	\int \frac{1}{2} \langle \nabla \eta^2 , \nabla \xi^2 \rangle e^{x_{3}} = -\int \xi^2 \left( \eta \Delta \eta + |\nabla\eta|^2+\eta \langle e_{3} , \nabla\eta \rangle\right) e^{x_{3}}
	\end{equation}
Applying \eqref{eq:L-phi} and \eqref{eq2}, we obtain
	\begin{equation}\label{eq3}
	\int \phi L(\phi) e^{x_{3}} = -\int \xi^2|\nabla\eta|^2 e^{x_{3}} \leq 0 \qedhere
	\end{equation}
When $\Sigma$ is graphical, we have a direction $\omega$ for which $\xi>0$ for all $x\in \Sigma$. Given a smooth compactly supported function $\phi$, we take $\eta(x) := \phi(x)/ \xi(x)$. This means that \eqref{eq3} is true for any compactly supported function $\phi$, which is the definition of $L$-stability.
\end{proof}

\section{Curvature estimate}

Colding and Minicozzi proved compactness theorem for self similar shrinkers in \cite{CM1}. In this section, we obtain the similar theorem for translating graphs using a point wise curvature estimate for translating graphs by mean curvature flow in $\Real^3$. For reaching this goal, we state theorem 2.10 in \cite{CM2}, which is Schoen curvature estimate for two dimensional minimal hypersurfaces $\Sigma$ immersed in Reiemannian manifold $\mathbb{M}^3$ with sectional curvature $K_{\mathbb{M}}$\cite{Schoen-1}. For $x\in \mathbb{M}$, $B_s(x)$ denotes the extrinsic geodesic ball with radius $s$ and center $x$. Similarly, For $x\in\Sigma$, $B^\Sigma_s(x)\subset\Sigma$ denotes the intrinsic geodesic ball and $r$ the intrinsic distance to $x$.

\begin{thm}[Schoen Curvature estimate \cite{Schoen-1}, Colding-Minicozzi \cite{CM2}]\label{CurvatureBound}
If $\Sigma^2\subset \mathbb{M}^3$ is an immersed stable minimal surface with trivial normal bundle and $B_{r_0}=B_{r_0}^\Sigma(x)\subset\Sigma\setminus \partial \Sigma$, where $|K_\mathbb{M}|\leq k^2$ and $r_0 < \rho_1(\pi/k,k)$ (with $\rho_1<min\{\pi/k,k\}$), then for some $C=C(k)$ and all $0<\sigma\leq r_0$,
\begin{eqnarray}
\displaystyle\sup_{B^\Sigma _{r_0 - \sigma}} |A|^2 \leq C\sigma^{-2}.
\end{eqnarray}
\end{thm}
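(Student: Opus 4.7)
The strategy I would pursue is the classical Schoen approach, which marries Simons' identity with the stability inequality and then runs a Moser/Choi--Schoen type iteration to pass from an integral curvature bound to the pointwise bound. Since $\Sigma^2$ is a stable minimal surface in $\mathbb{M}^3$, Simons' identity yields
\begin{equation*}
\Delta |A|^2 \geq 2|\nabla A|^2 - 2|A|^4 - c_1 k^2 |A|^2 - c_2 k^2,
\end{equation*}
where the last two error terms come from the bound $|K_\mathbb{M}|\leq k^2$ on the ambient sectional curvatures (and from Gauss/Codazzi applied in a curved ambient space). Together with Kato's inequality $|\nabla|A||^2\leq |\nabla A|^2$, this gives a differential inequality for $|A|$ that is ripe for iteration.

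The first key step is to test the stability inequality against $\phi=|A|^{1+q}\psi$ for a cutoff $\psi$ compactly supported in $B^\Sigma_{r_0}(x)$ and an exponent $q\geq 0$, integrate by parts, and use the Simons inequality above to absorb the dangerous $|A|^4$ term on the left using the $-|A|^2$ coming from stability. After the standard Cauchy--Schwarz juggling this yields, for any $p\geq 2$ below a critical exponent,
\begin{equation*}
\int_\Sigma |A|^{2p}\psi^{2p} \leq C(k,p)\int_\Sigma \bigl(|\nabla \psi|^{2p} + k^{2p}\psi^{2p}\bigr),
\end{equation*}
which is the reverse-Hölder seed that drives the iteration.

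Next I would run Moser iteration on $B^\Sigma_{r_0-\sigma}(x)$ using nested cutoffs and the Michael--Simon Sobolev inequality (the ambient curvature contributes a factor depending on $k$ but nothing worse because $r_0<\rho_1(\pi/k,k)$ keeps us inside a ball where comparison geometry controls volume and the Sobolev constant). The iteration upgrades the $L^{2p}$ bound on $|A|$ to the pointwise bound $\sup_{B^\Sigma_{r_0-\sigma}}|A|^2 \leq C\sigma^{-2}$, with $C$ depending only on $k$. Throughout, the hypothesis that the normal bundle is trivial is used to globally define a unit normal and hence to run the stability test on $B^\Sigma_{r_0}$ without sign ambiguities.

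The main obstacle, as in Schoen's original argument, is controlling the ambient-curvature error terms uniformly: the cross terms produced by commuting derivatives on $A$ and by the Gauss equation couple $|A|^2$ to $k^2$, and if one is not careful these propagate through Moser iteration and blow up the constant. The restriction $r_0<\rho_1(\pi/k,k)$ is exactly what keeps the ambient geometry close enough to Euclidean that the Sobolev constant, the injectivity radius in $\mathbb{M}$, and the comparison for the intrinsic ball are all uniformly controlled by $k$, so that the final constant $C=C(k)$ emerges cleanly from the iteration.
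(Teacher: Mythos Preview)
The paper does not actually prove this statement: Theorem~\ref{CurvatureBound} is quoted verbatim from the literature (Schoen~\cite{Schoen-1}, as presented in Colding--Minicozzi~\cite{CM2}, Theorem~2.10) and is used as a black box to derive Theorem~\ref{CurvatureBound2}. So there is no ``paper's own proof'' to compare against; any correct argument you supply goes strictly beyond what the paper does.

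That said, your sketch is the right one and matches Schoen's original strategy: Simons' identity in a curved ambient space, the stability inequality tested against $|A|^{1+q}\psi$ to absorb the $|A|^4$ term, and then iteration to pass from $L^p$ to $L^\infty$. One caveat: in two dimensions the cleanest route (and the one Colding--Minicozzi present) does not quite run Moser iteration with Michael--Simon Sobolev directly; rather, one combines the stability-plus-Simons $L^p$ bound with an \emph{a priori} area bound on $B^\Sigma_{r_0}$ (coming from stability and logarithmic cutoffs) and then invokes a Choi--Schoen/$\varepsilon$-regularity argument to get the pointwise estimate. Your description conflates these a bit, and the role you assign to $r_0<\rho_1(\pi/k,k)$ is slightly off---its primary purpose is to guarantee the area bound and keep the intrinsic exponential map under control, not to fix a Sobolev constant per se. These are refinements rather than gaps; the architecture of your proposal is sound.
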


For applying this theorem to obtain the curvature estimate, we need to compute the sectional curvature of $\Real^3$ respect to the conformal metric $g$. By doing some simple computations, we get for every $1\leq i,j\leq 2$, $K_{ij}=-\frac{1}{4}e^{-x_3}$, and $K_{i3}=K_{3i}=0$.

\begin{thm}\label{CurvatureBound2}
Let $\Sigma^2\subset \Real^3$ be a complete translating graph in mean curvature flow, if $B_{r_0e}^\Sigma(p)\subset(\Sigma\cap B_1(p))\setminus \partial (\Sigma\cap B_1(p))$, and $r_0e^{1/2} < \rho_1(\pi e^{-1},e)$, then for some $C$ and all $0<\sigma\leq r_0$,
\begin{eqnarray}
\displaystyle\sup_{B^\Sigma _{r_0 - \sigma}} |A|^2 \leq C \sigma^{-2}.
\end{eqnarray}
\end{thm}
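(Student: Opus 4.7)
The plan is to reduce the statement to the Schoen--Colding--Minicozzi estimate (Theorem \ref{CurvatureBound}) by viewing $\Sigma$ as a stable minimal surface in the conformal manifold $(\Real^3, g)$ with $g_{ij}=e^{x_3}\delta_{ij}$. By Theorem \ref{Lstable} the surface is $L$-stable, and by Proposition \ref{ms} this is exactly stable minimality in $(\Real^3,g)$; since $\Sigma$ is a graph, it is orientable and so has trivial normal bundle. The vertical translation $x_3\mapsto x_3-p_3$ carries $\Sigma$ to another translating graph and rescales $g$ by the constant factor $e^{-p_3}$, which preserves minimality and stability, so I will assume without loss of generality that $p_3=0$. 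With the ambient sectional curvatures of $(\Real^3,g)$ as computed just before the theorem, $K^g_{ij}=-\tfrac{1}{4}e^{-x_3}$ and $K^g_{i3}=0$, the bound $|K_g|\le \tfrac{e}{4}\le e^2$ holds on $B_1(0)$, allowing us to set $k=e$ in Theorem \ref{CurvatureBound}.

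The heart of the argument is converting between Euclidean and conformal intrinsic data on $\Sigma\cap B_1(p)$. Along curves the length element satisfies $ds_g=e^{x_3/2}\,ds_E$ with $e^{x_3/2}\in[e^{-1/2},e^{1/2}]$ on $B_1(p)$, so the Euclidean and conformal intrinsic distances on $\Sigma\cap B_1(p)$ are bi-Lipschitz with constant $e^{1/2}$. In particular, the $g$-intrinsic ball $B^\Sigma_{g,\,r_0 e^{1/2}}(p)$ sits inside the Euclidean intrinsic ball $B^\Sigma_{E,\,r_0 e}(p)$, which by hypothesis lies in $(\Sigma\cap B_1(p))\setminus\partial(\Sigma\cap B_1(p))$. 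Because $r_0 e^{1/2}<\rho_1(\pi/e,e)=\rho_1(\pi/k,k)$, Theorem \ref{CurvatureBound} applies in $(\Real^3,g)$ and gives $\sup_{B^\Sigma_{g,\,r_0 e^{1/2}-\tilde\sigma}}|A_g|_g^2\le C(e)\,\tilde\sigma^{-2}$ for all $0<\tilde\sigma\le r_0 e^{1/2}$.

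Finally, I would convert this back into Euclidean data. Under the conformal change $g=e^{x_3}\delta$, the pointwise second fundamental form obeys the standard transformation $\tilde A(X,Y)=e^{\phi}(A(X,Y)-g(X,Y)\nabla^\perp\phi)$ with $\phi=x_3/2$, whose coefficients are bounded on $B_1(p)$; hence $|A_E|_E^2$ and $|A_g|_g^2$ are comparable there by a universal constant. Choosing $\tilde\sigma=\sigma e^{1/2}$ and absorbing the constants coming from the radius comparison, the curvature bound $e$, and the conformal change of $A$ into a single $C$ yields the stated estimate. The main obstacle is purely bookkeeping: the three distinct conformal factors (in the sectional curvature bound, in the intrinsic radii, and in $|A|$) must all be tracked, and they are precisely what produces the $e$'s appearing in the hypothesis; once they are kept straight, the result is a direct application of Theorem \ref{CurvatureBound}.
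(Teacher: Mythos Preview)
Your proposal is correct and follows essentially the same approach as the paper: both arguments normalize by translating so that $p_3=0$, use Theorem~\ref{Lstable} and Proposition~\ref{ms} to view $\Sigma\cap B_1(p)$ as a stable minimal surface in the conformal metric, exploit the bi-Lipschitz comparison $e^{-1/2}d_E\le d_g\le e^{1/2}d_E$ on $B_1(p)$ to translate the intrinsic-ball hypothesis into the conformal setting, apply Theorem~\ref{CurvatureBound} with $k=e$, and then convert $|A_g|^2$ back to $|A_E|^2$ via the conformal change of the second fundamental form. The paper records the last step as the explicit inequality $|A|^2(x)\le e^{x_3-p_3}|\tilde A|^2(x)+\tfrac12$, which is exactly the quantitative form of your ``comparable by a universal constant on $B_1(p)$'' assertion.
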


\begin{proof}
For point $p\in\Sigma$, let $B_1$ be the Euclidean unit ball of radius $1$ and center $p$ in $\Real^3$. Define $\hat\Sigma=\Sigma\cap B_1$, note that $\hat\Sigma$ is immersed submanifold in $B_1$ and $|\hat{A}|(p)=|A|(p)$. Now let $\tilde{B}_1$ be $B_1$ with respect to the metric $g_{ij}=e^{x_3-p_3}\delta_{ij}$. From theorem \ref{Lstable}, $\hat\Sigma$ is stable minimal hypersurface in $\tilde{B}_1$. Note that we only multiplied the metric $g_{ij}$ in the theorem by a constant $e^{-p_3}$, which wont change the results of theorem.

Let the distance $d$ be the distance corresponding to Euclidean metric and $\tilde{d}$ the distance corresponding to the conformal metric. Note that $B^{\tilde{\Sigma}}_{r_0}=\{x \in \Sigma: \ \tilde{d}(x,p)<r_0\}$, where $\tilde{d}(x,p)$ is the infimum of the length of geodesic curves connecting two points $p$ and $x$ in $\tilde{\Sigma}$. For $x=(x_1,x_2,x_3)\in\Sigma \cap B_1$, define minimizing geodesic $\gamma:[0, 1] \to \Sigma\cap B_1$ in Euclidean metric, connecting $p$ and $x$ in $\Sigma$, such that $\gamma(0)=p$ and $\gamma(1)=x$. We have
\begin{eqnarray}
\tilde{d}(x,p)&\leq& \displaystyle \int_0^1 ||\gamma'(t)|| dt \notag\\
 &=&\displaystyle \int_0^1 \sqrt{\langle \gamma'(t),\gamma'(t)\rangle} dt \notag \\
&=&\displaystyle \int_0^1 e^{\frac{\gamma_3(t)-p_3}{2}}|\gamma'(t)| dt \notag \\
&\leq& e^{1/2} d(x,p).
\end{eqnarray}

This implies if $x\in B^\Sigma_r(p)\subset B_1$, then $x\in B^{\tilde{\Sigma}}_{re^{1/2}}(p)$. Now define minimizing geodesic $\tilde\gamma:[0, 1] \to \tilde\Sigma\cap \tilde{B}_1$ connecting $p$ and $x$ in $\tilde\Sigma \cap \tilde{B}_1$, such that $\gamma(0)=p$ and $\gamma(1)=x$. Using Cauchy Schwartz inequality we have
\begin{eqnarray}
e^{-1/2}d(x,p)&\leq& \displaystyle \int_0^1 e^{-1/2}|\tilde\gamma'(t)| dt \notag \\
&\leq& \displaystyle \int_0^1 e^{(\tilde\gamma_3(t)-p_3)/2}|\tilde\gamma'(t)| dt \notag \\
&=&\displaystyle \int_0^1 \sqrt{\langle \tilde\gamma'(t),\tilde\gamma'(t)\rangle} dt \notag \\
&=&\displaystyle \int_0^1 ||\gamma'(t)|| dt=\tilde{d}(x,p).
\end{eqnarray}

This implies if $x\in B_r^{\tilde\Sigma}(p)\subset \tilde{B}_1$, then $x\in B^\Sigma_{re^{1/2}}(p)$.
Note that if $x \in B_{r_0e^{1/2}}^{\tilde{\Sigma}}(p)$, then $x\in B_{r_0e}^\Sigma(p)\subset(\Sigma\cap B_1(p))\setminus \partial (\Sigma\cap B_1(p))$ which implies that $x\in\hat\Sigma\setminus \partial \hat\Sigma$. Also if $x\in B^\Sigma _{r_0 - \sigma}$, then $x\in B_{e^{1/2}(r_0 - \sigma)}^{\tilde\Sigma}$.

Since sectional curvature of $\tilde{B}_1$ is bounded ($|K_{\tilde{B}_1}|<e$), theorem \ref{CurvatureBound} imply that for $r_0e^{1/2}< \rho_1(\pi e^{-1},e)$ and $B_{r_0e^{1/2}}^{\tilde{\Sigma}}(p)\subset\hat\Sigma\setminus \partial \hat\Sigma$, for some $C$ we obtain for all $x\in B^\Sigma _{r_0 - \sigma}(p)$,

\begin{eqnarray}
 |A|^2(x)\leq e^{x_3-p_3}|\tilde{A}|^2(x)+1/2 \leq C e (e^{1/2}\sigma)^{-2}=C\sigma^{-2}.
\end{eqnarray}
\end{proof}

Using the curvature estimate and following the argument in Choi-Schoen \cite{CiSc}, we obtain following compactness theorem for translating graphs.

\begin{prop} \label{prop:comapctnessTS}
for fixed point $p$ in $\Real^3$, and $r >0$, let $\Sigma_j$ be embedded translating surfaces in
$B_{er}=B_{er}(p)\subset \Real^3$ with $\partial \Sigma_j \subset \partial B_{er}$. If each $\Sigma_j$ has area at most $V$
and genus at most $g$ for some fixed $V$, $g$ , then there is a finite collection of points $x_k$, a smooth embedded translating surface
$\Sigma \subset B_r$ with $\partial \Sigma\subset \partial B_r$ and a subsequence of the $\Sigma_j$'s that converges in $B_r$ (with finite multiplicity) to $\Sigma$ away from the $x_k$'s.
\end{prop}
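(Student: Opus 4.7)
The plan is to follow the Choi--Schoen compactness argument, adapted to the translator setting by feeding in the pointwise curvature bound from Theorem \ref{CurvatureBound2}. Define the singular set
\[
\mathcal{S} = \{\, x \in B_{er}(p) : \exists\, x_j \in \Sigma_j \text{ with } x_j \to x \text{ and } |A_{\Sigma_j}|(x_j) \to \infty \,\}.
\]
The first task is to show $\mathcal{S} \cap \overline{B_r}$ is finite. If $x \in \mathcal{S}$, then by the contrapositive of Theorem \ref{CurvatureBound2}, the surfaces $\Sigma_j$ cannot be uniformly graphical on any small intrinsic ball at $x$; a standard rescaling at $x$ followed by the Euclidean lower area bound for surfaces with bounded mean curvature shows that a definite amount of area of $\Sigma_j$ concentrates in every fixed-size neighborhood of $x$. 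Since $\operatorname{Area}(\Sigma_j) \leq V$, this gives a uniform upper bound on $\#\mathcal{S}$, so after passing to a subsequence $\mathcal{S} \cap \overline{B_r}$ consists of finitely many points $x_1,\dots,x_N$.

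Next, on any compact $K \subset B_r \setminus \{x_1,\dots,x_N\}$, the definition of $\mathcal{S}$ combined with Theorem \ref{CurvatureBound2} yields a uniform bound $\sup_j \sup_{K \cap \Sigma_j} |A| \leq C(K)$. Since each $\Sigma_j$ locally solves the quasilinear elliptic equation \eqref{te}, Schauder bootstrapping upgrades the $|A|$ bound to uniform $C^{k,\alpha}$ bounds on graphical pieces. Combining this with the area bound to control the number of sheets, a standard Arzel\`a--Ascoli diagonal argument extracts a subsequence converging in $C^\infty_{\mathrm{loc}}$ on $B_r \setminus \{x_1,\dots,x_N\}$ to a smooth embedded translator $\Sigma$ of finite multiplicity.

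The final and hardest step is to extend $\Sigma$ smoothly across each $x_k \in B_r$. Using Proposition \ref{ms} and Theorem \ref{Lstable}, each $\Sigma_j$ is a stable minimal surface in the conformal metric $g_{ij}=e^{x_3}\delta_{ij}$, and this stability passes to the limit on the punctured ball. The uniform area bound, together with the bounded genus hypothesis, controls the topology of $\Sigma$ in a punctured neighborhood of each $x_k$ (ruling out an infinite handle build-up near the puncture). With these ingredients in place, the classical removable-singularity theorem for stable minimal surfaces in a smooth Riemannian $3$-manifold, as used in Choi--Schoen, applies to $\Sigma$ near each $x_k$ and shows the puncture is removable. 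The main obstacle is precisely this removable-singularity step, since one must confirm that the conformal factor $e^{x_3}$, although smooth and bounded on $\overline{B_r}$, does not interfere with the conformal/isothermal-coordinate analysis at $x_k$; this is where the fact that the conformal change is locally bilipschitz does the essential work.
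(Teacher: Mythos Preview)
The paper gives no detailed proof; it simply says ``using the curvature estimate and following the argument in Choi--Schoen.'' Your outline is in the right spirit, but there is a genuine gap in the finiteness step, and a related confusion about the role of Theorem~\ref{CurvatureBound2}.

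First, your argument that $\mathcal S$ is finite via \emph{area} concentration does not work. The monotonicity-type lower area bound for surfaces of bounded mean curvature holds at \emph{every} point, not just at points of $\mathcal S$, so it cannot separate singular from regular points; a blow-up at $x\in\mathcal S$ produces a complete non-flat minimal surface whose area ratio may be arbitrarily close to $1$, so no fixed positive ``extra'' area is forced into small balls. In Choi--Schoen the quantity that concentrates is the total curvature $\int_{\Sigma_j}|A|^2$: Gauss--Bonnet, together with the area bound $V$ and the genus bound $g$ (and the bounded sectional curvature of the conformal metric on $\overline{B_{er}}$), yields $\int_{\Sigma_j}|A|^2\le C(V,g)$, and the $\epsilon$-regularity theorem says that at each $x\in\mathcal S$ at least a fixed $\epsilon_0>0$ of $\int|A|^2$ must concentrate. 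This is where the genus hypothesis enters; in your write-up the genus bound is invoked only at the removable-singularity step, which is not its role.

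Second, your appeal to Theorem~\ref{CurvatureBound2} and Theorem~\ref{Lstable} is inconsistent with the general ``translating surface'' hypothesis: both are proved only for translating \emph{graphs}. If one does assume the $\Sigma_j$ are $L$-stable (e.g., graphs), then the Schoen estimate in the conformal metric already gives a \emph{uniform} pointwise bound on $|A|$ throughout $\Sigma_j\cap B_r$, since every such point lies at fixed intrinsic distance from $\partial\Sigma_j\subset\partial B_{er}$; in that case $\mathcal S=\emptyset$ and the removable-singularity machinery is unnecessary. If one does not assume stability, Theorem~\ref{CurvatureBound2} is unavailable and you must run the genuine Choi--Schoen argument with $\int|A|^2$ as above. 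Your sketch conflates these two regimes.
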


\section{Classification of complete translating graphs}

In this section, we prove translating graphs over a domain $\Omega\subset\Real^2$ are asymptotic to a minimal surface next to the $\partial\Omega$. Which implies non existence of translating graphs over a bounded domain. Also, it concludes that complete translating graphs in $\Real^3$ can only be an entire graph over $\Real^2$ or be in one side of a vertical plane or between two vertical parallel planes.

\begin{lem}\label{delta}
If $\Sigma\subset\Real^3$ is a translating graph over a domain $\Omega\subset\Real^{2}$, then there is a $\delta>0$ such that for every $p\in\Sigma$, $\Sigma$ is a graph over the disk $D_\delta(p)\subset T_p\Sigma$ of radius $\delta$ centered at $p$.
\end{lem}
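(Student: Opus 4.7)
The plan is to combine the uniform pointwise curvature bound from Theorem~\ref{CurvatureBound2} with a standard graph representation argument for embedded surfaces of bounded second fundamental form.

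First, I would apply Theorem~\ref{CurvatureBound2} at an arbitrary point $p\in\Sigma$ to produce a uniform bound $|A|\le C_0$ on all of $\Sigma$. The key observation is that intrinsic distance on $\Sigma$ always dominates extrinsic distance, so $B^\Sigma_s(p)\subset \Sigma\cap B_s(p)$ for every $s>0$. Fixing a small $r_0>0$ with both $r_0 e<1$ and $r_0 e^{1/2}<\rho_1(\pi e^{-1},e)$, the hypothesis $B^\Sigma_{r_0e}(p)\subset(\Sigma\cap B_1(p))\setminus\partial(\Sigma\cap B_1(p))$ is met at every $p\in\Sigma$, and taking $\sigma=r_0/2$ in the conclusion of Theorem~\ref{CurvatureBound2} yields $|A|^2(p)\le 4C r_0^{-2}$, a constant independent of $p$.

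With the uniform bound $|A|\le C_0$ in hand, I would invoke the classical graph representation for embedded surfaces of bounded curvature. Consider the orthogonal projection $\pi_p\colon\Real^3\to T_p\Sigma$. Since $|\nabla\upsilon|=|A|\le C_0$, integrating along an intrinsic geodesic from $p$ to a nearby point $q$ gives that the angle between $T_q\Sigma$ and $T_p\Sigma$ is at most $C_0\,d^\Sigma(p,q)$; hence $d\pi_p|_{T_q\Sigma}$ is invertible on the intrinsic ball $B^\Sigma_{1/(2C_0)}(p)$ and $\pi_p$ is a local diffeomorphism there. Using embeddedness of $\Sigma$ (immediate since it is a graph over $\Omega$) together with the slow rotation of tangent planes, one rules out distinct sheets projecting to the same point within a disk $D_\delta(p)\subset T_p\Sigma$ for $\delta=\delta(C_0)>0$ small enough, so a component of $\Sigma$ near $p$ is the graph of a function $v\colon D_\delta(p)\to\Real$.

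The main obstacle is the injectivity/globalization step in the second paragraph: local graphicality over $T_p\Sigma$ follows immediately from the implicit function theorem, but extracting a graph radius $\delta$ that does not depend on $p$ requires ruling out a second sheet of $\Sigma$ folding back over the first within $D_\delta(p)$. The uniform bound $|A|\le C_0$ from the first step is precisely what forces any such second sheet to remain far in intrinsic distance from $p$, and combined with the fact that its tangent plane must then differ from $T_p\Sigma$ by more than can be accommodated by the rate $C_0$ of tangent-plane rotation, yields the uniform $\delta$.
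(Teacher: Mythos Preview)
Your proposal is correct and follows essentially the same route as the paper: invoke the uniform pointwise bound $|A|\le C_0$ from Theorem~\ref{CurvatureBound2}, then use that $|\upsilon(p)-\upsilon(q)|\le\int_\gamma|A|$ along an intrinsic geodesic to control the rotation of tangent planes and conclude the uniform graph radius. The paper's proof records only this normal-variation inequality and declares the lemma; your version is actually more careful in making the injectivity (no-second-sheet) step explicit, but the underlying argument is the same.
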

\begin{proof}



Let $p$ and $q$ be two different point in $\Sigma$. There is a geodesic $\gamma:[0,1]\to\Sigma$, so that $\gamma(0)=p$ and $\gamma(1)=q$. Now for $\upsilon$ normal vector to the $\Sigma$ we have
\begin{eqnarray}
|\upsilon(p)-\upsilon(q)|&\leq& \displaystyle\int_0^1 |\nabla_{\gamma'} \upsilon(\gamma(t))| dt \notag \\
&\leq&\displaystyle\int_0^1 |A(\gamma(t))| |\gamma'(t)| dt.
\end{eqnarray}

Hence the theorem \ref{CurvatureBound2} implies the lemma.

\end{proof}

\begin{thm}\label{t1}
There is no complete translating graph $\Sigma\subset\Real^{3}$ with nonzero constant speed $C$ over a bounded connected domain $\Omega\subset\Real^{2}$ with smooth boundary.
\end{thm}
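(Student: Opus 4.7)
The plan is to proceed by contradiction. Suppose $\Sigma$ is the graph of a smooth $u:\Omega\to\Real$ solving the translator equation over a bounded connected smooth-boundary $\Omega\subset\Real^2$, and that $\Sigma$ is complete.

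I first show $|\nabla u|$ must be unbounded on $\Omega$. If $|\nabla u|\leq M$, the horizontal projection $\Sigma\to\Omega$ is $\sqrt{1+M^2}$-bi-Lipschitz, so the intrinsic diameter of $\Sigma$ is at most $\sqrt{1+M^2}\,\mathrm{diam}(\Omega)<\infty$; by Hopf-Rinow, $\Sigma$ is then compact, contradicting the fact that its horizontal projection is the open set $\Omega$. So I may pick $x_i\in\Omega$ with $|\nabla u(x_i)|\to\infty$, and after a subsequence $x_i\to x_\infty\in\partial\Omega$ (an interior limit is excluded by the smoothness of $u$). Writing $p_i=(x_i,u(x_i))\in\Sigma$, the upward unit normal $\upsilon(p_i)$ then converges to a horizontal unit vector.

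Next I form $\Sigma_i:=\Sigma-u(x_i)\,e_3$, again complete translators by the translation-invariance of the equation, each passing through $(x_i,0)$. Theorem \ref{CurvatureBound2}, Lemma \ref{delta}, and the standard local area bound for embedded surfaces with bounded $|A|$ let me invoke Proposition \ref{prop:comapctnessTS} uniformly on balls of arbitrary radius; a diagonal argument then yields a subsequential smooth limit $\Sigma_\infty\subset\Real^3$, a smooth embedded complete translator containing $(x_\infty,0)$. Setting $\phi:=\langle\upsilon_{\Sigma_\infty},e_3\rangle$, one has $\phi\geq 0$ (as a limit of strictly positive $\upsilon_3$ on each graph $\Sigma_i$) and $\phi(x_\infty,0)=0$.

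By Lemma \ref{lem:L-kernel} with $\mathbf{v}=e_3$, $L\phi=0$; rewriting this as $\Delta\phi+\langle e_3,\nabla\phi\rangle=-|A|^2\phi\leq 0$, the strong maximum principle for the elliptic operator $\Delta+\langle e_3,\nabla\cdot\rangle$ forces $\phi\equiv 0$ on the connected component $C\subset\Sigma_\infty$ of $(x_\infty,0)$. Hence $C$ has horizontal unit normal at every point, so it is a vertical cylinder $C=\gamma\times\Real$ over a smooth connected complete embedded curve $\gamma\subset\Real^2$; the translator equation on $C$ reads $0=H_C=\kappa_\gamma$, so $\gamma$ is a straight line. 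But each $\Sigma_i$ has horizontal projection $\Omega$, so $C$ projects into $\bar\Omega$, forcing the complete line $\gamma$ to lie in the bounded set $\bar\Omega$, a contradiction. I expect the main technical obstacle to be the compactness step: producing the uniform local area bound (via Lemma \ref{delta}) and genus bound needed to invoke Proposition \ref{prop:comapctnessTS} on arbitrarily large balls, and then arguing from the regularity and boundarylessness of the diagonal limit that the vertical cylinder genuinely extends to a full straight line rather than stopping at some bounded segment.
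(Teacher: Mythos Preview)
Your argument follows the same backbone as the paper's: pick a sequence approaching $\partial\Omega$ along which the tangent plane becomes vertical, translate vertically, pass to a limit using the curvature estimate and compactness, show the limit is (a piece of) a vertical plane, and derive a contradiction from the boundedness of $\Omega$. The execution differs in two places. To identify the limit as a vertical plane, the paper argues by tangency: since each translated piece $F_n$ is a graph and the limit $F$ is tangent to $L_\delta(q)\times[-\delta,\delta]$ at $q$, equality is forced, as otherwise nearby $F_n$ would fail to be graphical. You instead apply the strong maximum principle to $\phi=\langle\upsilon,e_3\rangle$ via Lemma~\ref{lem:L-kernel}, which is cleaner and immediately globalizes to the whole connected component. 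For the final contradiction, the paper uses the Harnack inequality on $1/w$ to show $H\to 0$ along \emph{every} sequence approaching $q$, and then argues this is incompatible with the nonzero mean curvature of the cylinder $\partial\Omega\times\Real$; your contradiction---that a complete straight line $\gamma$ would sit inside the bounded set $\bar\Omega$---is more direct and bypasses the Harnack step. Both finishes are valid; yours is more self-contained. Your own caveat about the compactness step is well placed: Proposition~\ref{prop:comapctnessTS} as stated asks for area and genus bounds, but since Theorem~\ref{CurvatureBound2} already gives a uniform pointwise bound on $|A|$, no curvature concentration (hence no singular points $x_k$) can occur, and smooth subsequential convergence follows directly from Lemma~\ref{delta} and Arzel\`a--Ascoli on the local graphs over $D_\delta(p)\subset T_p\Sigma_i$, without separately verifying an area bound.
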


\begin{proof}
The proof inspired by the one used in \cite{HA}. Suppose $\Sigma$ is a complete immersed translating graph over a domain $\Omega \subset \Real^2$. Lemma \ref{delta} implies there exists $\delta>0$ such that for each $p\in\Sigma$, $\Sigma$ is a graph in exponential coordinates over the disk $D_\delta(p)\subset T_p\Sigma$ of radius $\delta$, centered at the $p$. We denote this graph by $G(p)\subset\Sigma$, has bounded geometry. Note that $\delta$ is independent of $p$ and the bound on the geometry of $G(p)$ is uniform as well.

We define $F(p)$; the surface $G(p)$ translated to height zero $\Real^2=\Real^2\times \{0\}$, i.e, let $\alpha_p$ be the isometry of $\Real^{3}$ which takes $p$ to $\pi(p)$, we denote $F(p)=\alpha_p(\Sigma)$.

Now, let $p\in\Sigma$, since $\Sigma$ is a graph over $\Omega$, there is a function $u:\Omega\to \Real^3$ so that $\Sigma$ is the graph of $u$. If $\Sigma$ is not an entire graph then $\partial\Omega\neq\emptyset$. Since $\Sigma$ is a translating graph by mean curvature flow, $u$ has bounded gradient on relatively compact subsets of $\Omega$. Let $q\in\partial \Omega$ be such that $u$ does not extend to any neighborhood of $q$.

Let $q_n$ be a sequence in $\Omega$ converging to $q$, and let $p_n=(q_n,u(q_n))\in\Sigma$ be images of $q_n$ in $\Sigma$. Let $F_n$ denote the image of $G(p_n)$ under the vertical translation taking $p_n$ to $q_n$. Observe that $T_{q_n}(F_n)$ converges to the vertical plane $P$, for any subsequence of the $q_n$. Otherwise the graph of bounded geometry $G(p_n)$, would extend to a vertical graph beyond $q$, for $q_n$ close enough to $q$. Hence $f$ would extend; a contradiction.

For $q\in\Omega$, we define $L_\delta(q)$ a line of length $2\delta$ centered at $q$. Let $L_\delta(q)$ be the line whose normal vector has the same direction as the normal vector of limit normal vectors of $F_n$. Since each $F_n$ is a graph over $D_\delta(p_n)\subset T_{p_n}(F_n)$, the surfaces $F_n$ are bounded horizontal graphs over $L_\delta(q)\times[-\delta, \delta]$ for $n$ large. The compactness theorem for translating graphs implies that there is a subsequence of $F_n$'s which are converging to a translating surface $F$. The surface $F$ is tangent to $L_\delta(q)\times[-\delta, \delta]$ at $q$. Note that $F=L_\delta(q)\times[-\delta, \delta]$. Because if it is not the case there is a small $\epsilon>0$ so that $F(q-\epsilon \vec{n}(q))$ has two positive and negative values, where $\vec{n}$ is the unit normal to the $L_\delta(q)$. Therefore for $n$ large $F_n$ is not a graph, which is contradiction.

The plane $P=L_\delta(q)\times[-\delta, \delta]$, because both planes $P$ and $L_\delta(q)\times[-\delta, \delta]$ are passing through the point $q\in\partial\Omega$ and their normal vectors are the same.

In this point, we prove $u(q_n)\to +\infty$ or $u(q_n) \to -\infty$. Let $l$ be a line of length $\epsilon$ inside $\Omega$, starting at $q$, orthogonal to $\partial\Omega$ at $q$. Let $f$ be the graph of $u$ over $l$. At points near $q$, $l$ has no horizontal tangents, because tangent planes of $u$ at these points are converging to $P$. So we assume $u$ is increasing along $l$ as one converges to $q$. If $u$ is bounded above, then $f$ would have a finite limit point $(q,l_q)$ and $f$ would have finite length up till $(q,l_q)$. Since $\Sigma$ is complete, $(q,l_q)\in\Sigma$, which contradicts by $\Sigma$ has a vertical tangent plane at $(q,l_q)$.

Note that from Lemma \ref{lem:L-kernel}, $1/w$ satisfies an elliptic partial differential equation. Thus by the Harnack inequality, for any sequence $q_n\in \Omega$ converging to $q$ we have $w(q_n)\rightarrow +\infty$. That means $H(q_n)\rightarrow 0$.

Which is contradiction, since the domain is bounded, the mean curvature of the graph next to the boundary should converge to the mean curvature of the cylinder $\partial\Omega\times\Real$, which is not zero.
\end{proof}

\begin{cor}
 If $\Sigma$ is a complete translating graph over a domain $\Omega\subset\Real^2$, then next to the boundary of $\Omega$, $\Sigma$ is asymptotic to a plane. So a translating graph over $\Real^2$ can only be between 2 parallel planes or in one side of a plane or an entire graph.
 \end{cor}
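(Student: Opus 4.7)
The plan is to harvest what we need from the proof of Theorem~\ref{t1}, reused in the generality of an arbitrary (possibly unbounded) domain $\Omega$. Fix a boundary point $q \in \partial\Omega$ and any sequence $q_n \in \Omega$ with $q_n \to q$. Lemma~\ref{delta} gives uniform bounded-geometry graphs $G(p_n) \subset \Sigma$ at the lifts $p_n = (q_n, u(q_n))$, and the renormalized surfaces $F_n = \alpha_{p_n}(G(p_n))$ subconverge by Proposition~\ref{prop:comapctnessTS}. The barrier argument inside Theorem~\ref{t1} forces the limit $F$ to coincide with the vertical slab $L_\delta(q) \times [-\delta,\delta]$, and the ``otherwise $u$ would extend past $q$'' dichotomy in that proof shows the limiting plane $P_q$ is independent of the subsequence. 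This establishes the first claim of the corollary: $\Sigma$ is asymptotic to a single vertical plane $P_q$ near every $q\in\partial\Omega$.

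For the classification I would upgrade this to the statement that $\partial\Omega$ is a disjoint union of complete straight lines. The projected line $\ell_q := \pi(P_q) \subset \Real^2$ passes through $q$, and if $\partial\Omega$ did not coincide with $\ell_q$ on a neighborhood of $q$, the blow-up direction of $\nabla u$ at $q$ could not be the normal to $\ell_q$, obstructing the asymptoticity to $P_q$ just established. Hence $\partial\Omega$ is locally straight at each of its points, so every connected component extends to a complete line in $\Real^2$.

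It remains to exclude configurations with non-parallel boundary lines or more than two parallel ones. Two distinct boundary lines $\ell_1 \neq \ell_2$ meeting at a point $v$ would give two different candidates $\ell_i \times \Real$ for $P_v$ depending on the approach direction, violating the uniqueness of $P_v$; hence all boundary lines are mutually parallel, and connectedness of $\Omega$ permits at most two of them. This produces the listed trichotomy: $\Omega = \Real^2$ (entire graph), $\Omega$ a half-plane (graph on one side of a vertical plane), or $\Omega$ a strip (graph between two parallel vertical planes). The main obstacle throughout is the uniqueness of $P_q$ itself: making precise the claim, used implicitly in Theorem~\ref{t1}, that two competing limit planes would yield a bounded-geometry graphical extension of $u$ across $q$ via Proposition~\ref{prop:comapctnessTS}, since this single ingredient both drives the asymptotic statement and excludes the wedge corners.
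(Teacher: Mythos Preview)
Your proposal follows the same approach as the paper, harvesting the limiting argument from the proof of Theorem~\ref{t1} to show $\Sigma$ is asymptotic to a vertical plane at each boundary point. The paper's own proof is only two sentences (convergence near $\partial\Omega$ to a minimal surface, which by completeness must be a vertical plane) and leaves the trichotomy entirely implicit, so your additional arguments about $\partial\Omega$ being locally straight and consisting of at most two parallel lines actually supply more detail than the paper itself provides.
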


 \begin{proof}
 From the proof of Theorem \ref{t1}, next to the boundary of $\Omega$, the graph $\Sigma$ converges to a minimal surface. Since $\Sigma$ is complete, it can only converge to a vertical plane.
 \end{proof}

\bibliographystyle{spmpsci}

\end{document}